\documentclass[12pt,leqno]{article}
\usepackage{amsfonts,amsthm,amsmath}

\usepackage{amssymb}
\usepackage{color} 

\theoremstyle{plain}
\newtheorem{thm}{Theorem}[section]
\newtheorem{prop}{Proposition}[section]

\newtheorem{cor}{Corollary}[section]

\theoremstyle{definition}
\newtheorem{df}{Definition}[section]

\newtheorem{ex}{Example}[section]


\newcommand{\RR}{\mathbb{R}}



\begin{document}

\title{A construction of spherical $3$-designs
}
\author{
Tsuyoshi Miezaki
\thanks{Faculty of Education, University of the Ryukyus, Okinawa  
903-0213, Japan 
miezaki@edu.u-ryukyu.ac.jp
(Corresponding author)
}
}

\date{}
\maketitle

\begin{abstract}
We give a construction for spherical $3$-designs. 
This construction is a generalization of 
Bondarenko's work.
\end{abstract}

{\small
\noindent
{\bfseries Key Words:}
Spherical designs, Lattices, Spherical harmonics.\\ \vspace{-0.15in}

\noindent
2010 {\it Mathematics Subject Classification}. 
Primary 05B30;
Secondary 11H06.\\ \quad

\noindent
{\it Classification codes according to UDC}. 
519.1
\\ \quad
}

\section{Introduction}
This paper is inspired by \cite{B}, which 
gives an optimal antipodal spherical $(35,240,1/7)$ code
whose vectors form a spherical $3$-design. 
To explain our results, 
we review the concept of spherical $t$-designs and \cite{B}. 

First, we explain the concept of spherical $t$-designs.
\begin{df}[\cite{DGS}]
For a positive integer $t$, a finite nonempty set $X$ in the unit sphere
\[
S^{d} = \{{ x} = (x_1, \ldots , x_{d+1}) \in \RR ^{d+1}\mid 
x_1^{2}+ \cdots + x_{d+1}^{2} = 1\}
\]
is called a spherical $t$-design in $S^{d}$ if the following condition is satisfied:
\[
\frac{1}{|X|}\sum_{{ x}\in X}f({ x})=\frac{1}{|S^{d}|}
\int_{S^{d}}f({ x})d\sigma ({ x}), 
\]
for all polynomials $f({ x}) = f(x_1, \ldots ,x_{d+1})$ 
of degree not exceeding $t$. Here, the righthand side involves the surface integral over the sphere 
and $|S^{d}|$, the volume of sphere $S^{d}$. 

\end{df}
The meaning of spherical $t$-designs is that the average value of the integral of any polynomial of degree up to $t$ on the sphere can be replaced by its average value over a finite set on the sphere. 

The following is an equivalent condition of the 
antipodal spherical designs: 
\begin{prop}[\cite{V}]\label{prop:V}
An antipodal set $X = \{x_1,\ldots, x_N\}$ in $S^d$ 
forms a spherical $3$-design if and only if
\[
\frac{1}{|X|^2}\sum_{x_i,x_j\in X}(x_i,x_j)^2=\frac{1}{d+1}. 
\]

An antipodal set $X = \{x_1,\ldots, x_N\}$ in $S^d$ 
forms a spherical $5$-design if and only if
\begin{align}\label{eqn:4}
\left\{
\begin{array}{l}
\displaystyle\frac{1}{|X|^2}\sum_{x_i,x_j\in X}(x_i,x_j)^2=\frac{1}{d+1},\\
\displaystyle\frac{1}{|X|^2}\sum_{x_i,x_j\in X}(x_i,x_j)^4=\frac{3}{(d+3)(d+1)}. 
\end{array}
\right.
\end{align}

\end{prop}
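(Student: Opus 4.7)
The plan is to invoke the standard harmonic characterization of spherical designs: $X \subset S^d$ is a spherical $t$-design if and only if $\sum_{x \in X} H(x) = 0$ for every harmonic polynomial $H$ of degree $k$ with $1 \leq k \leq t$. Since $X$ is antipodal, every harmonic polynomial of odd degree is an odd function and hence sums to zero on $X$ automatically. Thus the 3-design condition reduces to the vanishing of $\sum_{x} H(x)$ for all degree-$2$ harmonics $H$, and the 5-design condition adds the analogous vanishing for degree-$4$ harmonics.

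For the first equivalence, I would introduce the positive semidefinite matrix $S = \sum_{x \in X} xx^{\top}$ on $\R^{d+1}$. Since each $x$ has unit length, $\mathrm{tr}(S) = |X|$, and a direct expansion gives
\begin{equation*}
\sum_{x_i, x_j \in X}(x_i, x_j)^2 \;=\; \mathrm{tr}(S^2) \;=\; \sum_{k=1}^{d+1} \lambda_k^2,
\end{equation*}
where $\lambda_1, \ldots, \lambda_{d+1}$ are the eigenvalues of $S$. By Cauchy--Schwarz, $\sum_k \lambda_k^2 \geq |X|^2/(d+1)$, with equality if and only if $\lambda_1 = \cdots = \lambda_{d+1}$, i.e., $S = \tfrac{|X|}{d+1} I$. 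The latter identity is equivalent to $\sum_{x \in X} x^{\top} A x = 0$ for every traceless symmetric matrix $A$, which is precisely the vanishing of $\sum_x H(x)$ for all degree-$2$ harmonics $H$.

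For the 5-design equivalence, I would perform the analogous but more delicate analysis with the symmetric $4$-tensor $T_{abcd} = \sum_{x \in X} x_a x_b x_c x_d$, observing that $\sum_{x_i, x_j \in X}(x_i, x_j)^4 = \|T\|_F^2$. The 3-design condition (equivalently $S = \tfrac{|X|}{d+1}I$) fixes all partial traces $\sum_a T_{aabc}$, confining $T$ to an affine subspace of symmetric $4$-tensors. On this subspace, the squared Frobenius norm attains its minimum $\tfrac{3|X|^2}{(d+1)(d+3)}$ uniquely at the tensor $T^{\ast}$ corresponding to the uniform distribution on $S^d$. Thus the scalar equality in (\ref{eqn:4}) forces $T = T^{\ast}$, which translates into $\sum_x H(x) = 0$ for every degree-$4$ harmonic $H$.

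The main obstacle is this last step: one has to carefully decompose the space of symmetric $4$-tensors into a ``harmonic'' component orthogonal to components fixed by the trace constraints, and verify that $\|T\|_F^2$ detects precisely the harmonic part. This is the higher-order analogue of the eigenvalue equality-case argument used for $S$. By contrast, the necessity direction in both parts is immediate: substitute $f(x) = (x,y)^{2k}$ into the design condition for each $y \in X$, evaluate the spherical integrals $|S^d|^{-1}\int_{S^d} (x,y)^{2k}\, d\sigma(x)$ (equal to $1/(d+1)$ for $k=1$ and $3/((d+1)(d+3))$ for $k=2$), and sum over $y \in X$.
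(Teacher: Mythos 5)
The paper offers no proof of this proposition; it is quoted from Venkov's lecture notes \cite{V}, where it is derived from the positivity of Gegenbauer sums: for every $k\geq 1$ one has $\sum_{x,y\in X} g_{k,d}((x,y))\geq 0$, with equality precisely when $\sum_{x\in X}H(x)=0$ for all $H\in\mathrm{Harm}_k(S^d)$; expanding $g_{2,d}$ and $g_{4,d}$ in monomials and using antipodality to dispose of the odd degrees then yields exactly the stated moment identities. Your argument is correct and is, in substance, the moment-tensor reformulation of that same positivity argument: your matrix inequality $\mathrm{tr}(S^2)\geq |X|^2/(d+1)$ with equality iff $S=\frac{|X|}{d+1}I$ is the case $k=2$, and your minimization of $\lVert T\rVert_F^2$ over the affine slice of symmetric $4$-tensors is the case $k=4$. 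The one step you flag as delicate does go through cleanly: since each $x$ is a unit vector, the partial trace $\sum_a T_{aabc}$ equals $S_{bc}$, so the $3$-design condition pins down the image of $T$ under contraction; identifying symmetric $4$-tensors with homogeneous quartics, the contraction is (up to a constant) the Laplacian, the decomposition you need is the standard $P_4=H_4\oplus r^2H_2\oplus r^4H_0$, and $T-T^{\ast}$ is Frobenius-orthogonal to $T^{\ast}$ because $T^{\ast}$ is isotropic while $T-T^{\ast}$ is totally traceless. Hence $\lVert T\rVert_F^2=\lVert T^{\ast}\rVert_F^2+\lVert T-T^{\ast}\rVert_F^2$, and equality with $\frac{3|X|^2}{(d+1)(d+3)}$ forces $T=T^{\ast}$, i.e.\ the vanishing of all degree-$4$ harmonic sums. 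What your route buys is that it avoids quoting the addition formula for spherical harmonics, at the cost of carrying out the tensor decomposition by hand; what Venkov's route buys is that it generalizes immediately to all $t$ with no new work at each degree.
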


Next, we review \cite{B}. 
Let
\[
\Delta=\sum_{j=1}^{d+1}\frac{\partial^2}{\partial x_j^2}. 
\]
We say that a polynomial $P$ in $\RR^{d+1}$ is harmonic if 
$\Delta P=0$. 
For integer $k\geq 1$, 
the restriction of a homogeneous harmonic polynomial of degree 
$k$ to $S^{d}$ is called a spherical harmonic of degree $k$. 
We denote by $\mbox{Harm}_k(S^d)$ the vector space of 
the spherical harmonics of degree $k$. 
Note that (see for example \cite{V})
\[
\dim\mbox{Harm}_k(S^d)=\frac{2k+d-1}{k+d-1}\binom{d+k-1}{k}.
\]

For $P,Q\in \mbox{Harm}_k(S^d)$, 
we denote by $\langle P,Q\rangle$ the usual inner product
\[
\langle P,Q\rangle:=\int_{S^d}P(x)Q(x)d\sigma(x), 
\]
where $d\sigma(x)$ is a normalized Lebesgue measure on the 
unit sphere $S^d$. 
For $x\in S^d$, 
there exists $P_x\in \mbox{Harm}_k(S^d)$ such that 
\[
\langle P_x,Q\rangle=Q(x)\ \mbox{for all }Q\in \mbox{Harm}_k(S^d). 
\]
It is known that 
\[
P_x(y)=g_{k,d}((x,y)), 
\]
where $g_{k,d}$ is a Gegenbauer polynomial. 
Let 
\[
G_x=\frac{P_x}{g_{k,d}(1)^{1/2}}. 
\]
We remark that 
\[
\langle G_x,G_y\rangle=\frac{g_{k,d}((x,y))}{g_{k,d}(1)}. 
\]
(For a detailed explanation of 
Gegenbauer polynomials, see \cite{V}.) 
Therefore, 
if we have a set 
$X=\{x_1,\ldots,x_N\}$ in $S^d$, 
then we obtain the set 
$G_X=\{G_{x_1},\ldots,G_{x_N}\}$ in $S^{\dim{\rm Harm}_k(S^{d})-1}$. 

Let $X=\{x_1,\ldots,x_{120}\}$ be an arbitrary subset of $240$
normalized minimum vectors of the $E_8$ lattice such that 
no pair of antipodal vectors is present in $X$. 
Set $P_x(y)=g_{2,7}((x,y))$. 
\cite{B} showed that $G_X \cup -G_{X}$ is an optimal antipodal spherical $(35,240,1/7)$ code whose vectors form a spherical $3$-design, 
where
\[
-G_X:=\{-G_{x}\mid G_x\in G_X\}. 
\]
Furthermore, \cite{B} showed that
$G_X \cup -G_{X}$ is a spherical $3$-design, 
using the special properties of the $E_8$ lattice. 
However, this fact is an example that extends to a more general setting as follows. 
The spherical $3$-design obtained by Bondarenko in \cite{B} 
is a special case of 
our main result, which is presented as the following theorem. 
\begin{thm}\label{thm:main}
Let $X$ be a finite subset of sphere $S^d$ satisfying the 
condition {\rm (\ref{eqn:4})}. 
We set $P_x(y)=g_{2,d}((x,y))$. 
Then $G_X\cup -G_X$ is a spherical $3$-design 
in $S^{\dim{\rm Harm}_2({S^d})-1}$. 
\end{thm}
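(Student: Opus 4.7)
The plan is to apply the first characterization in Proposition \ref{prop:V} to the antipodal set $Y := G_X \cup -G_X \subset S^{D}$, where $D = \dim \Harm_2(S^d) - 1 = d(d+3)/2 - 1$. Thanks to antipodality, it is enough to verify the single moment identity
\[
\frac{1}{|Y|^2}\sum_{u,v\in Y}(u,v)^2 \;=\; \frac{1}{D+1} \;=\; \frac{2}{d(d+3)}.
\]
Since $(\pm u, \pm v)^2 = (u,v)^2$ for any choice of signs and $|Y| = 2|X|$ (counted with multiplicity, in case $X$ contains antipodal pairs), the left-hand side collapses to $\tfrac{1}{|X|^2}\sum_{x,y\in X}(G_x,G_y)^2$, and the reproducing-kernel identity $(G_x,G_y) = g_{2,d}((x,y))/g_{2,d}(1)$ recalled in the introduction turns this into
\[
\frac{1}{g_{2,d}(1)^2}\cdot\frac{1}{|X|^2}\sum_{x,y\in X}g_{2,d}((x,y))^2.
\]

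The next step is to write $g_{2,d}$ down explicitly. Under the reproducing-kernel normalization $g_{2,d}(1) = \dim \Harm_2(S^d) = d(d+3)/2$ together with orthogonality to constants against the weight $(1-t^2)^{(d-2)/2}$, the degree-$2$ Gegenbauer polynomial is forced to be $g_{2,d}(t) = \tfrac{d+3}{2}\bigl((d+1)t^2 - 1\bigr)$. Squaring expresses $g_{2,d}((x,y))^2$ as an explicit linear combination of $1$, $(x,y)^2$, and $(x,y)^4$, which are precisely the three quantities whose averages over $X\times X$ are prescribed by hypothesis (\ref{eqn:4}). Substituting these three values and simplifying yields
\[
\frac{1}{|X|^2}\sum_{x,y\in X} g_{2,d}((x,y))^2 \;=\; \frac{d(d+3)}{2} \;=\; g_{2,d}(1),
\]
and dividing by $g_{2,d}(1)^2$ recovers the target value $2/(d(d+3))$.

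The only nontrivial ingredient in this plan is pinning down the constants in $g_{2,d}$; after that, the proof is a one-line substitution. The conceptual reason the strategy works is that $g_{2,d}$ is a quadratic in $t^2$ whose constant term is forced by orthogonality to constants, so its mean square over $X\times X$ depends only on the second and fourth inner-product moments of $X$. This is exactly the pair of numbers pinned down by condition (\ref{eqn:4}), which explains why the hypothesis that characterizes antipodal $5$-designs in $S^d$ is also the natural one for producing $3$-designs in $S^D$ via the Gegenbauer lift $x \mapsto G_x$.
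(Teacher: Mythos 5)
Your proof is correct and follows essentially the same route as the paper: apply Venkov's antipodal criterion to $G_X\cup -G_X$, reduce the second moment over $Y$ to the average of $g_{2,d}((x,y))^2$ over $X\times X$, and evaluate it using the two moments prescribed by condition (\ref{eqn:4}). The only (immaterial) difference is your normalization $g_{2,d}(1)=\dim\mathrm{Harm}_2(S^d)$ versus the paper's $g_{2,d}(t)=\frac{d+1}{d}t^2-\frac{1}{d}$ with $g_{2,d}(1)=1$; since $\langle G_x,G_y\rangle=g_{2,d}((x,y))/g_{2,d}(1)$ is normalization-independent, both computations yield $2/(d(d+3))=1/(D+1)$ as required.
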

We denote by $\widetilde{G_X}$ the set $G_X\cup -G_X$
defined in Theorem \ref{thm:main}. 
\begin{cor}\label{cor:main}
\begin{enumerate}
\item 
Let $X$ be a spherical $4$-design in $S^d$. 
Then $\widetilde{G_X}$ is a spherical $3$-design 
in $S^{\dim{\rm Harm}_2({S^d})-1}$. 

\item 
Let $X$ be a spherical $4$-design in $S^d$ 
and an antipodal set. 
Let $X'$ be an arbitrary subset of $X$ with $|X'|=|X|/2$ 
such that no pair of antipodal vectors is present in $X'$. 
Then $\widetilde{G_{X'}}$ is a spherical $3$-design 
in $S^{\dim{\rm Harm}_2({S^d})-1}$. 
\end{enumerate}
\end{cor}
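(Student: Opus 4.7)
The plan is to reduce both parts of the corollary to Theorem~\ref{thm:main} by verifying that the hypothesis (\ref{eqn:4}) holds in each case.

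For part~(1), I would fix $y\in X$ and observe that the map $x\mapsto (x,y)^{2}$ is a polynomial in $x$ of degree $2$, while $x\mapsto (x,y)^{4}$ is a polynomial in $x$ of degree $4$. Since $X$ is a spherical $4$-design in $S^d$, the design identity gives
\[
\frac{1}{|X|}\sum_{x\in X}(x,y)^{2k}=\int_{S^d}(x,y)^{2k}\,d\sigma(x)\qquad(k=1,2),
\]
for every $y\in X$. The right-hand side is a standard spherical integral, independent of $y\in S^d$, and equals $\tfrac{1}{d+1}$ for $k=1$ and $\tfrac{3}{(d+1)(d+3)}$ for $k=2$ (these are precisely the constants appearing in Proposition~\ref{prop:V}). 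Averaging a second time over $y\in X$ yields the two identities in (\ref{eqn:4}), after which Theorem~\ref{thm:main} immediately gives the conclusion.

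For part~(2), the key observation is that for any integer $k\ge 1$, the quantity $(x,y)^{2k}$ is invariant under each of the sign flips $x\mapsto -x$ and $y\mapsto -y$. Since $X=X'\sqcup(-X')$ by hypothesis, a straightforward bookkeeping gives
\[
\sum_{x,y\in X}(x,y)^{2k}=4\sum_{x,y\in X'}(x,y)^{2k},\qquad |X|^{2}=4|X'|^{2},
\]
so the normalized even-power sums agree for $X$ and $X'$. By part~(1), $X$ satisfies (\ref{eqn:4}), hence so does $X'$, and a second application of Theorem~\ref{thm:main} yields the desired conclusion for $\widetilde{G_{X'}}$.

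There is no substantial obstacle in this proof; the argument is essentially a translation between the $4$-design property and the two second/fourth-moment identities demanded by Theorem~\ref{thm:main}, together with the routine antipodal-doubling symmetry used in part~(2). The only nontrivial input is the evaluation of $\int_{S^d}(x,y)^{2k}\,d\sigma(x)$ for $k=1,2$, which is classical and moreover is implicit in Proposition~\ref{prop:V}.
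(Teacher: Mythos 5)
Your proof is correct and follows the route the paper intends: both parts reduce to Theorem~\ref{thm:main} by checking condition (\ref{eqn:4}), using the $4$-design property to evaluate the second and fourth moments via the standard integrals $\int_{S^d}(x,y)^2\,d\sigma=\tfrac{1}{d+1}$ and $\int_{S^d}(x,y)^4\,d\sigma=\tfrac{3}{(d+1)(d+3)}$, and using the invariance of even-power sums under the antipodal splitting $X=X'\sqcup(-X')$ for part (2). The paper leaves this verification implicit, and your write-up supplies exactly the missing details.
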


In section \ref{sec:proof}, we give a 
proof of Theorem \ref{thm:main}. 
In section \ref{sec:ex}, we give some 
examples.

\section{Proof of Theorem \ref{thm:main}}\label{sec:proof}

Now, we give the proof of Theorem \ref{thm:main}. 
\begin{proof}[Proof of Theorem \ref{thm:main}]

Let $X=\{x_1,\ldots,x_N\}$ be in $S^d$ and 
$G_X=\{G_{x_1},\ldots,G_{x_N}\}$ be in $\mbox{Harm}_2(S^d)$. 
By Proposition \ref{prop:V}, we have 
\begin{align*}
\left\{
\begin{array}{l}
\displaystyle\frac{1}{|X|^2}\sum_{x_i,x_j\in X}(x_i,x_j)^2=\frac{1}{d+1},\\
\displaystyle\frac{1}{|X|^2}\sum_{x_i,x_j\in X}(x_i,x_j)^4=\frac{3}{(d+3)(d+1)}, 
\end{array}
\right.
\end{align*}
since $X$ is a spherical $4$-design. 
We have the following Gegenbauer polynomial of degree $2$ on $S^d$: 
\begin{align*}
g_{2,d}(x)=\frac{d+1}{d}x^2-\frac{1}{d}. 
\end{align*}

It is enough to show that 
\[
\frac{1}{|X|^2}\sum_{x_i,x_j\in X}\langle G_{x_i},G_{x_j}\rangle^2=\frac{2}{d(d+3)}
\]
since
\[
\dim\mbox{Harm}_2(S^d)=\frac{d+3}{d+1}\binom{d+1}{2}=\frac{d(d+3)}{2} 
\]
and $G_X\cup -G_X$ is an antipodal set. 
We remark that if $X$ is a spherical $t$-design, then 
$X\cup -X$ is also a spherical $t$-design. 

In fact, 
\begin{align*}
\frac{1}{|X|^2}\sum_{x_i,x_j\in X}\langle G_{x_i},G_{x_j}\rangle^2
&=\frac{1}{|X|^2}\sum_{x_i,x_j\in X}g_{2,d}((x_i,x_j))^2\\
&=\frac{1}{|X|^2}\sum_{x_i,x_j\in X}\left(\frac{(d+1)^2}{d^2}(x_i,x_j)^4
-2\frac{(d+1)}{d^2}(x_i,x_j)^2+\frac{1}{d^2}\right)\\
&=\frac{(d+1)^2}{d^2}\frac{3}{(d+3)(d+1)}
-2\frac{(d+1)}{d^2}\frac{1}{d+1}+\frac{1}{d^2}\\
&=\frac{2}{d(d+3)}. 
\end{align*}
Therefore, if $X=\{x_1,\ldots,x_N\}$ is a spherical $4$-design, then 
$G_X \cup -G_X$ is a spherical $3$-design. 

\end{proof}

\section{Examples}\label{sec:ex}

In this section, 
we give some examples of using Theorem \ref{thm:main}. 

First we recall the concept of 
a strongly perfect and spherical 
$( d +1,N ,a )$ code. 
\begin{df}[\cite{V}]
A lattice $L$ is called strongly perfect if 
the minimum vectors of $L$ form a spherical $5$-design. 
\end{df}

\begin{df}[\cite{CS}]
An antipodal set 
$X = \{ x_1,\ldots,x_N\}$ in $S^d$ 
is called an antipodal spherical 
$( d +1,N ,a )$ code 
if $|(x_i,x_j )|\leq a$ for some $a > 0$ 
and all $x_i,x_i\in X$, $i\neq j$, are not antipodal. 

\end{df}

Next we give some examples. 

\begin{ex}
The strongly perfect lattices whose ranks are
less than $12$ have been classified \cite{{NV1},{NV2}}. 
Such lattices whose ranks are greater than $1$ are as follows: 
\[
A_2,\ D_4,\ E_6,\ E_6^{\sharp},\ E_7,\ E_7^{\sharp},
\ E_8,\ K_{10},\ K_{10}^{\sharp},\ CT_{12}. 
\]
(For a detailed explanation, see \cite{{NV1},{NV2}}.)
Let $L$ be one of the above lattices 
and $X$ be the minimum vectors of $L$. 
Then, let $X'$ be an arbitrary subset of $X$ with $|X'|=|X|/2$ 
such that no pair of antipodal vectors is present in $X'$. 

By Corollary \ref{cor:main}, 
$G_{X}\cup -G_{X}$ is a spherical $3$-design in 
$S^d$, where $d$ is as follows: 
\[
\begin{array}{c||c|c|c}
L&(d+1,N,a)\ {\rm code}&|(x_i,x_j)|&|\langle G_{x_i},G_{x_j}\rangle| \\\hline\hline
A_2&(2,6,1/2)&\{1/2\}&\{1/2\}\\\hline
D_4&(9,24,1/3)&\{0,1/2\}&\{0,1/3\}\\\hline
E_6&(20,72,1/5)&\{0,1/2\}&\{1/10,1/5\}\\\hline
E_6^\sharp&(20,54,1/8)&\{1/4,1/2\}&\{1/10,1/8\}\\\hline
E_7&(27,126,1/6)&\{0,1/2\}&\{1/8,1/6\}\\\hline
E_7^\sharp&(27,56,1/27)&\{1/3\}&\{1/27\}\\\hline
E_8&(35,240,1/7)\ \cite{B}&\{0,1/2\}&\{1/7\}\\\hline
K_{10}&(54,276,1/6)&\{0,1/4,1/2\}&\{1/24,1/9,1/6\}\\\hline
K_{10}^\sharp&(54,54,1/6)&\{1/8,1/4,1/2\}&\{1/24,3/32,1/6\}\\\hline
CT_{12}&(77,756,2/11)&\{0,1/4,1/2\}&\{1/44,1/11,2/11\}\\
\end{array}
\]

\end{ex}

\begin{ex}
Let $X$ be the minimum vectors of the Barnes--Wall lattice of rank $16$, 
and 
let $X'$ be an arbitrary subset of $X$ with $|X'|=|X|/2$ 
such that no pair of antipodal vectors is present in $X'$. 
We remark that $X$ is a spherical $7$-design. 

By Corollary \ref{cor:main}, 
$G_{X}\cup -G_{X}$ is a spherical $3$-design in 
$S^d$, where $d$ is as follows: 
\[
\begin{array}{c||c|c|c}
L&(d+1,N,a)\ {\rm code}&|(x_i,x_j)|&|\langle G_{x_i},G_{x_j}\rangle| \\\hline\hline
{\rm BW16} &(135,4320,1/5)&\{0,1/4,1/2\}&\{0,1/15,1/5\}\\
\end{array}
\]

\end{ex}

\begin{ex}
Let $X$ be the minimum vectors of the Leech lattice, and 
let $X'$ be an arbitrary subset of $X$ with $|X'|=|X|/2$ 
such that no pair of antipodal vectors is present in $X'$. 
We remark that $X$ is a spherical $11$-design. 

By Corollary \ref{cor:main}, 
$G_{X}\cup -G_{X}$ is a spherical $3$-design in 
$S^d$, where $d$ is as follows: 
\[
\begin{array}{c||c|c|c}
L&(d+1,N,a)\ {\rm code}&|(x_i,x_j)|&|\langle G_{x_i},G_{x_j}\rangle| \\\hline\hline
{\rm Leech} &(299,196560,5/23)&\{0,1/4,1/2\}&\{1/46,1/23,5/23\}\\
\end{array}
\]

\end{ex}




\section*{Acknowledgments}

This work was supported by JSPS KAKENHI (18K03217).



\end{document}